\theoremstyle{plain}
\newtheorem{theorem}{Theorem}
\newtheorem{claim}{Claim}
\title{Toroidal Seifert fibered surgeries\\ on alternating knots}
\author[K.~Ichihara]{Kazuhiro Ichihara}
\address{Department of Mathematics, College of Humanities and Sciences, Nihon University,
3-25-40 Sakurajosui, Setagaya-ku, Tokyo 156-8550, Japan}
\email{ichihara@math.chs.nihon-u.ac.jp}
\author{In Dae Jong}
\address{Department of Mathematics, Kinki University, 3-4-1 Kowakae, Higashiosaka City, Osaka 577-0818, Japan} 
\email{jong@math.kindai.ac.jp}
\dedicatory{Dedicated to Professor Yoshihiko Marumoto for his 60th birthday}
\date{\today}
\keywords{Seifert fibered surgery; toroidal surgery; alternating knot}
\subjclass[2010]{Primary 57M50; Secondary 57M25}
\begin{document}

\maketitle

\begin{abstract}
We give a complete classification of 
toroidal Seifert fibered surgeries on alternating knots. 
Precisely, we show that 
if an alternating knot $K$ admits a toroidal Seifert fibered surgery, 
then $K$ is either the trefoil knot and the surgery slope is zero, or 
the connected sum of a $(2,p)$-torus knot and a $(2,q)$-torus knot 
and the surgery slope is $2(p+q)$ with $|p|, |q| \ge 3$. 
\end{abstract}

\section{Introduction}

The hyperbolic Dehn surgery theorem, 
due to Thurston \cite[Theorem 5.8.2]{Thurston1978}, 
states that 
all but finitely many Dehn surgeries on a hyperbolic knot yield hyperbolic manifolds. 
Here a knot is called \textit{hyperbolic} 
if its complement admits a complete hyperbolic structure of finite volume. 
In view of this, 
a Dehn surgery on a hyperbolic knot yielding a non-hyperbolic manifold 
is called \textit{exceptional}. 
As a consequence of the Geometrization Conjecture, 
raised by Thurston~\cite[Section 6, question 1]{Thurston1982}, 
and established by celebrated Perelman's works~\cite{Perelman2002,Perelman2003,Perelman2003a}, 
exceptional surgeries are classified into \textit{Seifert fibered surgeries}, \textit{toroidal surgeries} or \textit{reducible surgeries}. 
We refer the reader to~\cite{Boyer2002} for a survey. 

Here we note that the classification is not exclusive, 
for there exist Seifert fibered 3-manifolds which are toroidal or reducible. 
However, a hyperbolic knot in the 3-sphere $S^3$ 
is conjectured to admit no reducible surgery. 
This is the Cabling Conjecture~\cite{Gonzalez-AcunaShort1986} which is well known but still open. 
Thus, we consider in this paper a Dehn surgery on a knot in $S^3$ 
yielding a $3$-manifold which is toroidal and Seifert fibered, 
called a \textit{toroidal Seifert fibered surgery}. 

It was shown that there exist infinitely many hyperbolic knots in $S^3$ 
each of which admits a toroidal Seifert fibered surgery 
by Eudave--Mu\~{n}oz~\cite[Proposition 4.5 (1) and (3)]{Eudave-Munoz2002}, 
and Gordon and Luecke~\cite{GordonLuecke1999a} independently. 
On the other hand, Motegi~\cite{Motegi2003} studied 
toroidal Seifert fibered surgeries on symmetric knots, and 
gave several restrictions on the existence of such surgeries. 
In particular, he showed that just the trefoil knot 
admits a toroidal Seifert fibered surgery 
among two-bridge knots~\cite[Corollary 1.6]{Motegi2003}.
Furthermore the authors showed that 
if a Montesinos knot admits a toroidal Seifert fibered surgery, 
then the knot is the trefoil knot and the surgery slope is zero~\cite{IchiharaJong2010}.

In this paper, we show the following. 

\begin{theorem}\label{thm:toralt}
If an alternating knot $K$ admits a toroidal Seifert fibered surgery, 
then $K$ is either the trefoil knot and the surgery slope is zero, or 
the connected sum of a $(2,p)$-torus knot and a $(2,q)$-torus knot 
and the surgery slope is $2(p+q)$. 
Here $p$ and $q$ are odd integers with $|p|, |q| \ge 3$. 
\end{theorem}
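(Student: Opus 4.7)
The plan is to stratify alternating knots $K$ into three cases---composite, prime non-hyperbolic, and prime hyperbolic---and handle each separately.

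\textbf{Composite case.} If $K=K_1\# K_2$, the exterior of $K$ contains a swallow--follow torus, and any non-meridional Dehn filling preserves an essential torus coming from the decomposing sphere. Invoking the classification of Seifert fibered surgeries on connected sums (Gordon; Miyazaki--Motegi), both summands must be torus knots and the surgery slope must be the sum of the natural Seifert slopes on the two summands. Since a composite alternating knot factors into prime alternating summands by Menasco's theorem, and the only prime alternating torus knots are the $(2,n)$-torus knots, this forces $K=T(2,p)\# T(2,q)$ with slope $2(p+q)$, where $p,q$ are odd and $|p|,|q|\ge 3$ (to exclude unknotted summands).

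\textbf{Prime non-hyperbolic case.} Menasco's theorem asserts that a prime alternating knot is either hyperbolic or a $(2,n)$-torus knot. In the latter case, Moser's classification of Dehn surgeries on torus knots pinpoints exactly one toroidal Seifert fibered surgery: the $0$-surgery on the trefoil.

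\textbf{Prime hyperbolic case.} The bulk of the proof is to rule out the existence of any toroidal Seifert fibered surgery on a hyperbolic alternating knot $K$. I would first combine Gordon--Luecke's slope-distance bound for toroidal surgeries with Boyer--Zhang's constraints on cyclic/Seifert fibered surgeries to force the surgery slope to be an integer. I would then exploit the checkerboard surfaces of an alternating diagram---which are essential with explicit integer boundary slopes---to bound $\Delta(\mu,r)$ via the Culler--Shalen seminorm. Finally, using the fact that the resulting Seifert fibered space must contain an incompressible torus (so its base orbifold is constrained and the torus is either vertical or horizontal in the Seifert structure), I would carry out a case analysis in the spirit of the authors' earlier Montesinos knot paper, pinning down the possible knot exteriors via the alternating structure and deriving a contradiction with the simultaneous toroidality and Seifert fiberedness of the filled manifold. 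The principal obstacle lies precisely here: one must extract sharp geometric information from the alternating diagram on one hand, and satisfy two independent structural constraints on the filled manifold on the other, and show these are jointly unrealizable for any hyperbolic alternating $K$.
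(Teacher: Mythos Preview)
Your composite case and your prime non-hyperbolic case are essentially the paper's arguments: the paper invokes Miyazaki--Motegi~\cite{MiyazakiMotegi1997} for composite knots, and the torus-knot case is absorbed into its treatment of prime alternating knots. So far so good.

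The hyperbolic case, however, is not a proof but an outline whose decisive step is missing---you say as much yourself. The Culler--Shalen seminorm and checkerboard boundary slopes may well constrain the surgery slope, but you have no mechanism that converts ``$r$ is an integer close to a checkerboard slope'' plus ``$K(r)$ is toroidal Seifert fibered'' into a contradiction for a general hyperbolic alternating $K$. The vertical/horizontal dichotomy for the essential torus in $K(r)$ constrains the target manifold, not the knot exterior, and it is unclear how you intend to feed that back to the alternating diagram.

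The paper sidesteps all of this with a single structural reduction you have overlooked: if a \emph{prime alternating} knot admits a toroidal surgery, then its exterior contains an essential once- or twice-punctured torus, and by Boyer--Zhang~\cite{BoyerZhang1996} and Patton~\cite{Patton1995} this forces $K$ to be either a two-bridge knot or an alternating pretzel knot of length three. After that, the argument is short: two-bridge knots are handled by Motegi~\cite[Corollary~1.6]{Motegi2003}; a length-three alternating pretzel is small by Oertel~\cite{Oertel1984}, so by~\cite{IchiharaMotegiSong2008} a toroidal Seifert fibered slope must be $0$ and the knot fibered, which a parity check on the pretzel parameters rules out. The point is that the toroidal hypothesis already pins down the alternating knot to two explicit families before one ever looks at the Seifert fibered condition---this is the key lemma your plan lacks.
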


We note that Theorem~\ref{thm:toralt} for hyperbolic alternating knots also follows from 
a complete classification of exceptional surgeries on hyperbolic alternating knots 
recently achieved by the first author and Masai~\cite{IchiharaMasai}. 
While the classification is established by heavy computer-aided calculations, 
the proof given in this paper is quite simpler and direct.

\subsection*{Acknowledgments}
The first author is partially supported by
Grant-in-Aid for Young Scientists (B), No.~20740039,
Ministry of Education, Culture, Sports, Science and Technology, Japan.

\section{Proof}

We start with recalling definitions and basic facts. 

A knot in the $3$-sphere $S^3$ is called \textit{alternating} 
if it admits a diagram with alternately arranged 
over-crossings and under-crossings running along it. 
Menasco~\cite{Menasco1984} showed that 
an alternating knot is hyperbolic unless it is the connected sum of them or a $(2,p)$-torus knot. 

Let $K$ be a knot in $S^3$ and $E(K)$ the exterior of $K$. 
A {\it slope} on the boundary torus $\partial E(K)$ is an isotopy class of a non-trivial simple closed curve on $\partial E(K)$.  
For a slope $\gamma$ on $\partial E(K)$, 
we denote by $K(\gamma)$ the $3$-manifold obtained by Dehn surgery on $K$ along the slope $\gamma$, 
i.e., $K(\gamma)$ is obtained by gluing a solid torus $V$ 
to $E(K)$ so that a simple closed curve representing $\gamma$ bounds a meridian disk in $V$. 
We call such a slope $\gamma$ the {\it surgery slope}. 
It is well known that 
a slope on $\partial E(K)$ is parameterized by 
an element of $\mathbb{Q} \cup \{ 1/0 \}$ 
by using the standard meridian-longitude system for $K$. 
Thus, when a slope $\gamma$ corresponds to $r \in \mathbb{Q} \cup \{1/0\}$, 
we call the Dehn surgery along $\gamma$ the \textit{$r$-surgery} for brevity, and 
denote $K(\gamma)$ by $K(r)$. 
See~\cite{RolfsenBook} for basic references.

\begin{proof}[Proof of Theorem~\ref{thm:toralt}]

Now we start the proof of Theorem~\ref{thm:toralt} which will be achieved by the following two claims. 

\begin{claim}\label{clm:alt}
If a prime alternating knot $K$ admits a toroidal Seifert fibered surgery, 
then $K$ is the trefoil knot and the surgery slope is zero. 
\end{claim}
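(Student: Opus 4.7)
The plan is to split the argument according to Menasco's structure theorem, which tells us that a prime alternating knot is either a $(2,p)$-torus knot or hyperbolic.

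For $K = T(2,p)$ with $p$ odd and $|p|\ge 3$, I would invoke Moser's classification of Dehn surgeries on torus knots. The $a/b$-surgery ($\gcd(a,b)=1$) yields a Seifert fibered space with three exceptional fibers of orders $2$, $p$, $|a-2pb|$ whenever $|a-2pb|\ge 2$, a lens space when $|a-2pb|=1$, and a reducible manifold when $a=2pb$. The resulting small Seifert fibered space is toroidal exactly when its base orbifold $S^{2}(2,p,|a-2pb|)$ has vanishing orbifold Euler characteristic, i.e., $(2,p,|a-2pb|)$ equals $(2,3,6)$, $(2,4,4)$, or $(3,3,3)$. Since $p$ is odd with $|p|\ge 3$, only $(2,3,6)$ can arise, forcing $K$ to be the trefoil; a direct inspection of the remaining options then singles out the surgery slope $r=0$.

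For hyperbolic alternating $K$, I would invoke two classical inputs. First, Menasco also showed that the exterior of a prime alternating knot contains no closed essential surface, so $K$ is \emph{small}. Second, by known restrictions on toroidal surgery slopes, a toroidal surgery on a small hyperbolic knot must have integer slope. Thus the surgery slope equals some integer $n$, and the task reduces to ruling out integral toroidal Seifert fibered surgeries on hyperbolic alternating knots. For this, I would combine the Gordon--Luecke--Eudave-Mu\~{n}oz classification of integer toroidal surgeries on hyperbolic knots---which forces $K$ to admit a specific essential tangle decomposition along a Conway sphere---with the additional Seifert fibered hypothesis. This pins $K$ down to a small explicit family, which can then be checked to contain no alternating knot, either by a combinatorial analysis of alternating diagrams or by an Alexander polynomial obstruction using the Crowell--Murasugi identity $g(K)=\frac{1}{2}\deg\Delta_{K}(t)$ for alternating knots.

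The main obstacle is this last step: the concrete verification that no hyperbolic alternating knot survives the reduction. After the structural results have cut the problem down to an explicit list of candidate knots, the technical heart of the argument lies in obstructing each via the alternating hypothesis, mirroring the strategy used in the authors' earlier work on Montesinos knots~\cite{IchiharaJong2010}.
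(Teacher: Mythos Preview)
Your torus knot case is fine, but the hyperbolic case contains two genuine gaps. First, Menasco's theorem does \emph{not} say that prime alternating knots are small: it says that any closed incompressible surface in such a complement carries a circle isotopic to a meridian, which rules out essential tori (hence the hyperbolicity corollary) but does not by itself exclude closed essential surfaces of higher genus---and indeed prime alternating knots need not be small. Second, and more seriously, there is no ``Gordon--Luecke--Eudave-Mu\~{n}oz classification of integer toroidal surgeries''. You have the direction reversed: Gordon and Luecke proved that a \emph{non}-integral toroidal surgery on a hyperbolic knot in $S^{3}$ is half-integral and arises from Eudave-Mu\~{n}oz's tangle construction; integral toroidal surgeries admit no comparable structure theorem. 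Adding the Seifert fibered hypothesis does not help either---as the introduction already records, infinitely many hyperbolic knots admit toroidal Seifert fibered surgeries, and they are not classified. The ``small explicit family'' you plan to test against the alternating condition does not exist.

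The paper's route avoids both issues by working with the essential punctured torus that a toroidal surgery places in $E(K)$. Patton~\cite{Patton1995} (cf.\ Boyer--Zhang~\cite{BoyerZhang1996}) classified such surfaces for prime alternating knots, and this forces $K$ to be two-bridge or a length-three alternating pretzel $P(a,b,c)$. The two-bridge case is exactly Motegi~\cite[Corollary~1.6]{Motegi2003}. For a non-two-bridge $P(a,b,c)$, Oertel~\cite{Oertel1984} \emph{does} give smallness, and then~\cite{IchiharaMotegiSong2008} forces $K$ fibered with $r=0$; a short parity and boundary-slope check (alternating forces the odd parameters to share a sign, so the relevant slope $2(b+c)$ is nonzero) finishes.
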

\begin{proof}
Let $K$ be a prime alternating knot such that $K(r)$ is a toroidal Seifert fibered $3$-manifold. 
Then $K$ is either a two-bridge knot or an alternating pretzel knot of length three, see~\cite[Lemma 3.1]{BoyerZhang1996}, \cite[p13]{Patton1995}. 

If $K$ is a two-bridge knot, 
then $K$ must be the trefoil knot and $r=0$~\cite[Corollary 1.6]{Motegi2003}. 

Assume that $K$ is an alternating pretzel knot of length three $P(a,b,c)$ which is not a two-bridge knot. 
Then $K$ is a small knot~\cite{Oertel1984}. 
Therefore $K$ must be fibered and $r=0$~\cite[Proposition 1]{IchiharaMotegiSong2008}. 
If the integers $a$, $b$, and $c$ are odd, then $P(a,b,c)$ is a genus one knot. 
This contradicts to the assumption that $K$ is not a two-bridge knot since the genus one fibered knots are just the trefoil knot and the figure-eight knot. 
If one of the integers $a$, $b$ and $c$ is even and the others are odd, 
then the surgery slope $r$ is a boundary slope 
of a non-orientable surface with crosscap number two~\cite{BoyerZhang1996}, \cite{Patton1995}. 
Without loss of generality, we may assume that $a$ is even and $b,c$ are odd. 
Then we have $r=2(b+c)$. 
Since $K$ is alternating, the sign of $b$ coincides with that of $c$. 
Therefore we have $r = 2(b+c) \ne 0$. 
This contradicts the condition $r=0$. 
\end{proof}

\begin{claim}\label{clm:alt-comp}
If a composite alternating knot $K$ admits a toroidal Seifert fibered surgery, 
then $K$ is the connected sum of a $(2,p)$-torus knot and a $(2,q)$-torus knot, 
and the surgery slope is $2(p+q)$. 
Here $p$ and $q$ are integers with $|p|, |q| \ge 3$. 
\end{claim}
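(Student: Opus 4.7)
The plan is to exploit the essential swallow-follow torus arising from the connected sum decomposition, together with Menasco's classification of prime alternating knots, to pin down both the summands and the surgery slope.

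First, I would decompose $K = K_1 \# \cdots \# K_n$ ($n \ge 2$) into prime alternating summands; by Menasco's theorem cited above, each $K_i$ is either a hyperbolic alternating knot or a $(2, p_i)$-torus knot. Grouping as $K = K' \# K''$ with $K' = K_1$ and $K'' = K_2 \# \cdots \# K_n$ (both non-trivial), the decomposing 2-sphere produces an essential swallow-follow torus $T \subset E(K)$ that separates the $K'$-side from the $K''$-side.

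Next, I would analyze the fate of $T$ after $r$-surgery. If $T$ compresses in $K(r)$, standard arguments would show that $K(r)$ is reducible. But the only reducible closed orientable Seifert fibered 3-manifolds are $S^2 \times S^1$ and $\mathbb{RP}^3 \# \mathbb{RP}^3$, and neither contains an essential torus, contradicting our toroidal hypothesis. Hence $T$ remains essential in $K(r)$ and, being an essential torus in a Seifert fibered space, may be taken vertical (horizontal tori impose Euler-characteristic obstructions incompatible with our setup). Thus $T$ splits $K(r)$ into two Seifert fibered pieces $M'$ and $M''$ glued along $T$ with matching fiberings.

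Each piece $M_i$ is obtained from $E(K_i)$ (taking $K'' = K_2$ if ultimately $n=2$) by filling a boundary annulus with a solid-torus piece cut from the surgery filling, and the Seifert fibration of $M_i$ would restrict to a Seifert fibration of $E(K_i)$. Since a non-trivial prime knot with Seifert fibered exterior must be a torus knot, and an alternating torus knot is a $(2, p)$-torus knot, each prime summand has the form $T(2, p_i)$. Applying the same reasoning to $K''$ requires $K''$ itself to be a torus knot, but a connected sum of non-trivial knots is never prime; this forces $n = 2$, and we conclude $K = T(2, p) \# T(2, q)$ with $|p|, |q| \ge 3$.

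Finally, the slope $r$ is determined by the requirement that the Seifert fiberings of $M'$ and $M''$ match across $T$. The exterior $E(T(2, n))$ is Seifert fibered over a disk with two exceptional fibers of orders $2$ and $|n|$, and its regular fiber has slope $2n$ in standard meridian-longitude coordinates. Working out the matching condition under the connected-sum identification of the two boundary tori yields $r = 2(p+q)$. The main obstacle is step three: carefully identifying each $M_i$ with an annular Dehn filling of $E(K_i)$ and verifying that its Seifert fibration really does restrict to one on $E(K_i)$, thereby ruling out the hyperbolic alternating case; once this is settled, the slope computation reduces to a direct application of Moser's classification of surgeries on torus knots.
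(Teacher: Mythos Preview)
The paper disposes of this claim in a single sentence by invoking Miyazaki--Motegi's classification \cite[Theorem~1.2]{MiyazakiMotegi1997} of non-simple Seifert fibered surgeries on non-hyperbolic knots, which already contains the full conclusion. Your approach is therefore genuinely different: you are attempting to reprove the relevant special case of that theorem directly via the swallow-follow torus.

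The gap you flag in step three is real and is the heart of the matter. The swallow-follow torus $T$ does not cut $K(r)$ into two pieces of the same type: one side is exactly $E(K'')$, but the other side $M'$ is the result of performing the $r$-surgery on the pattern $K'\#(\text{core})$ inside the swallow-follow solid torus---equivalently, $E(K')$ with the surgery solid torus glued on along a boundary annulus---so $M'\ne E(K')$. Once $T$ is vertical, $E(K'')$ inherits a Seifert fibration and hence $K''$ is a torus knot (forcing $n=2$ and $K''=T(2,q)$); but a Seifert fibration on $M'$ does \emph{not} automatically restrict to one on $E(K')$, since the core of the filling solid torus need not be a fiber. One can try the second swallow-follow torus $T'$ to exhibit $E(K')$ directly as a complementary piece, but then one must first check that $T'$ remains incompressible, i.e.\ that the piece built from $E(T(2,q))$ and the surgery solid torus on its other side is not itself a solid torus, and this depends on the slope. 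Likewise, the final slope determination is not simply Moser's theorem, because the surgery is on the connected sum rather than on a single torus knot; one must actually match the regular fibers of the two torus-knot exteriors across the decomposing annulus in the meridian--longitude coordinates of $K$. All of this can be carried out, but it amounts to reproducing the relevant portion of \cite{MiyazakiMotegi1997}, which is precisely why the paper simply cites it.
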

\begin{proof}
According to the classification of non-simple Seifert fibered surgeries 
on non-hyperbolic knots~\cite[Theorem 1.2]{MiyazakiMotegi1997}, 
a composite alternating knot admitting a toroidal Seifert fibered surgery 
is just the connected sum of a $(2,p)$-torus knot and a $(2,q)$-torus knot, 
and the surgery slope is $2(p+q)$ with $|p|, |q| \ge 3$. 
\end{proof}

By Claims~\ref{clm:alt} and \ref{clm:alt-comp}, the proof of Theorem~\ref{thm:toralt} has completed. 
\end{proof}

\end{document}